\DeclareMathOperator{\md}{\operatorname{\mathsf{mod}}}
\DeclareMathOperator{\nf}{\operatorname{\mathsf{NF}}}
\theoremstyle{plain}
\newtheorem{thm}{Theorem}[section]
\newtheorem{cor}[thm]{Corollary}
\theoremstyle{definition}
\newtheorem{eg}[thm]{Example}
\newtheorem{conj}[thm]{Conjecture}
\numberwithin{equation}{section}
\newcommand{\fm}{\mathfrak{m}}
\newcommand{\fp}{\mathfrak{p}}
\newcommand{\QQ}{\mathbb{Q}}
\newtheorem{chunk}[thm]{\hspace*{-1.065ex}\bf}
\def\G-dim{\operatorname{\mathsf{G-dim}}}
\def\pd{\operatorname{\mathsf{pd}}}
\def\Tr{\mathsf{Tr}\hspace{0.01in}}
\DeclareMathOperator{\coker}{coker}
\def\depth{\operatorname{\mathsf{depth}}}
\def\Ext{\operatorname{\mathsf{Ext}}}
\DeclareMathOperator{\height}{height}
\def\Hom{\operatorname{\mathsf{Hom}}}
\DeclareMathOperator{\Supp}{Supp}
\DeclareMathOperator{\Spec}{Spec}
\def\Tor{\operatorname{\mathsf{Tor}}}
\def\urltilda{\kern -.15em\lower .7ex\hbox{\~{}}\kern .04em}
\def\urldot{\kern -.10em.\kern -.10em}\def\urlhttp{http\kern -.10em\lower -.1ex
\hbox{:}\kern -.12em\lower 0ex\hbox{/}\kern -.18em\lower 0ex\hbox{/}}
\begin{document}
\title[On the vanishing of self extensions over Cohen-Macaulay local rings]{On the vanishing of self extensions over \\Cohen-Macaulay local rings}

\subjclass[2000]{13D07, 13H10}

\keywords{Auslander-Reiten Conjecture, vanishing of Ext and Tor, canonical modules}

\thanks{Araya and Takahashi were partly supported by JSPS Grants-in-Aid for
Scientific Research 26400056 and 16K05098, respectively. Sadeghi's research was supported by a grant from IPM}

\author[T. Araya]{Tokuji Araya}
\address{Tokuji Araya\\
Department of Applied Science, Faculty of Science, Okayama University of Science, Ridaicho, Kitaku, Okayama 700-0005, Japan.}
\email{araya@das.ous.ac.jp}

\author[O. Celikbas]{Olgur Celikbas}
\address{Olgur Celikbas\\
Department of Mathematics \\
West Virginia University\\
Morgantown, WV 26506-6310, U.S.A}
\email{olgur.celikbas@math.wvu.edu}

\author[A. Sadeghi]{Arash Sadeghi}
\address{Arash Sadeghi\\
School of Mathematics, Institute for Research in Fundamental Sciences (IPM), P.O. Box: 19395-5746, Tehran, Iran}
\email{sadeghiarash61@gmail.com}

\author[R. Takahashi]{Ryo Takahashi}
\address{Ryo Takahashi\\ Graduate School of Mathematics, Nagoya University, Furocho, Chikusaku, Nagoya 464-8602, Japan}
\email{takahashi@math.nagoya-u.ac.jp}
\urladdr{http://www.math.nagoya-u.ac.jp/~takahashi/}

\setcounter{tocdepth}{1}

\begin{abstract} The celebrated Auslander-Reiten Conjecture, on the vanishing of self extensions of a module, is one of the long-standing conjectures in ring theory. Although it is still open, there are several results in the literature that establish the conjecture over Gorenstein rings under certain conditions. The purpose of this article is to obtain extensions of such results over Cohen-Macaulay local rings that admit canonical modules. In particular, our main result recovers theorems of Araya, and
Ono and Yoshino simultaneously.
\end{abstract}

\maketitle{}

\section{Introduction}
Throughout $R$ denotes a commutative Noetherian local ring and $\md R$ denotes the category of all finitely generated $R$-modules. 

There are various conjectures from the representation theory of algebras that have been transplanted to Commutative Algebra. One of the most important such conjectures is the celebrated \emph{Auslander-Reiten Conjecture} \cite{AuRe}, which states that a finitely generated module $M$ over an Artin algebra $A$ satisfying $\Ext^i_A(M,M) = \Ext^i_A(M,A) =0$ for all $i\geq 1$ must be projective. This long-standing conjecture is closely related to other important conjectures such as the \emph{Nakayama's Conjecture} \cite{Nak} and the \emph{Tachikawa Conjecture} \cite{ABS, Tach}. Although the Auslander-Reiten Conjecture was initially proposed over Artin algebras, it can be stated over arbitrary Noetherian rings; in local algebra, the conjecture is known as follows:

\begin{conj} (Auslander and Reiten \cite{AuRe}) \label{ARconj} Let $R$ be a local ring and let $M\in \md R$. If $\Ext^i_R(M,M) = \Ext^i_R(M,R) =0$ for all $i\geq 1$, then $M$ is free.
\end{conj}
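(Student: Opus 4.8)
The statement to be proved is the full Auslander--Reiten Conjecture, which is open in general; accordingly I can only propose a strategy and indicate honestly where, at present, it must break down. The plan is to reduce to the complete case, extract structural information about $M$ from the $\Ext$-vanishing, and then cut down the dimension by a well-chosen regular element, isolating the two genuine obstructions at the end.

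First I would reduce to the case $R=\widehat{R}$ complete. Since the completion map $R\to\widehat{R}$ is faithfully flat and $M$ is finitely generated, $\Ext$ commutes with this base change, so that $\Ext^i_{\widehat{R}}(\widehat{M},\widehat{M})\cong\Ext^i_R(M,M)\otimes_R\widehat{R}$ and likewise with $R$ in the second argument; hence the hypotheses ascend to $\widehat{R}$. As freeness of a finitely generated module descends along a faithfully flat extension, it suffices to treat the complete case, where $R$ is a homomorphic image of a regular local ring and Matlis duality is available. Next I would try to pin down the homological size of $M$. The condition $\Ext^i_R(M,R)=0$ for all $i\ge1$ suggests that $M$ should behave like a module of Gorenstein dimension zero; the aim is to prove $M$ is totally reflexive, so that the Auslander--Bridger equality $\operatorname{G-dim}_R M+\depth M=\depth R$ forces $\depth M=\depth R$. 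Granting this, the self-orthogonality $\Ext^i_R(M,M)=0$ should be propagated along a filtration of $M$ by its syzygies, leaving the task of deducing that $M$ is free.

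When $\depth R>0$ I would then attempt a dimension reduction: choose $x\in\fm$ that is a nonzerodivisor on $R$, on $M$, and on $\Hom_R(M,M)$, and that preserves the vanishing of the relevant $\Ext$ modules upon passage to $R/xR$, with superficial or exact-zero-divisor elements as the natural candidates. The hope is that $\Ext^i_{R/xR}(M/xM,M/xM)$ and $\Ext^i_{R/xR}(M/xM,R/xR)$ inherit the vanishing, so that induction on $\depth R$ reduces the problem to the Artinian case, in which $M$ has finite length and one can bring representation-theoretic tools to bear.

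The hard part, and the reason the conjecture is still open, lies in two places. Without assuming $R$ Cohen--Macaulay there is no canonical module, so the duality arguments that drive the rest of this paper are simply unavailable, and even establishing that $M$ is totally reflexive in the second step is not known in this generality. Moreover, the Artinian base case is itself a genuinely hard instance: a self-orthogonal module over a self-injective algebra is not known to be projective in general, so the induction has no secured foundation. I therefore expect the reduction to, and the handling of, the Cohen--Macaulay/Gorenstein situation to be the fundamental barrier---which is precisely why the theorems of this paper are proved under the added hypothesis that $R$ is Cohen--Macaulay admitting a canonical module, rather than in the full stated generality.
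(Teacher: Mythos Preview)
The statement in question is Conjecture~\ref{ARconj}, which the paper records as an open problem and does \emph{not} prove; there is therefore no proof in the paper against which to compare your proposal. You correctly acknowledge this from the outset and offer a strategic outline together with an honest account of where it must break down, which is the appropriate response here.

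Your diagnosis of the obstructions is sound and consistent with the paper's own choice of hypotheses: the duality machinery behind Theorem~\ref{t1}---the spectral sequence in \ref{CDCT} combined with local duality---requires a canonical module, unavailable without the Cohen--Macaulay assumption, and the depth-zero (in particular Artinian) base case of any induction on $\depth R$ is itself an unresolved instance of the conjecture. One refinement worth noting in your second step: the vanishing of $\Ext^i_R(M,R)$ for all $i\ge1$ does not by itself yield total reflexivity, since one also needs $M$ reflexive and $\Ext^i_R(M^{\ast},R)=0$ for all $i\ge1$, and deducing these additional conditions from the hypotheses of the conjecture is already open in general. Granted total reflexivity, your dimension-reduction step is in fact standard---a nonzerodivisor on $R$ and $M$ does preserve the relevant $\Ext$-vanishing upon passage to $R/xR$, and freeness ascends---so the genuine barriers are precisely the two you isolate.
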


Recently there have been significant interest and hence some progress towards the Auslander-Reiten conjecture; see, for example, \cite{CT, CH1, CH2, GT, HL, CSV}. A particular result worth recording on the Auslander-Reiten Conjecture is due to Huneke and Leuschke: the Auslander-Reiten Conjecture holds over Gorenstein normal domains; see \cite[1.3]{HL}. Another result in this direction, which is of interest to us, is due to Araya \cite{Ar}. For a nonnegative integer $n$ and an $R$-module $M$, we set $X^{n}(R)=\{\fp \in \Spec(R): \height(\fp) \leq n\}$ and say $M$ is \emph{locally free on} $X^{n}(R)$ if $M_{\fp}$ is a free $R_{\fp}$-module for each prime ideal $\fp \in X^{n}(R)$.

\begin{thm} (Araya \cite{Ar}) \label{Tokuji} Let $R$ be a Gorenstein local ring of dimension $d\geq 2$ and let $M\in \md R$. Then $M$ is free provided that the following hold:
\begin{enumerate}[(i)]
\item $M$ is locally free on $X^{d-1}(R)$.
\item $M$ is maximal Cohen-Macaulay.
\item $\Ext^{d-1}_R(M, M)=0$.
\end{enumerate}
\end{thm}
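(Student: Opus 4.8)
The plan is to read off the freeness of $M$ from Auslander--Reiten duality for the stable category of maximal Cohen--Macaulay modules, the key point being that the cohomological degree $d-1$ is exactly the one that the Serre duality of that category pairs with stable homomorphisms. We first record some routine reductions. As $R$ is Gorenstein and $M$ is maximal Cohen--Macaulay, $M$ is totally reflexive, so it has a complete resolution and Tate cohomology modules $\widehat{\Ext}^i_R(M,-)$, $i\in\bZ$, agreeing with ordinary Ext in positive degrees: $\widehat{\Ext}^i_R(M,N)\cong\Ext^i_R(M,N)$ for all $i\ge 1$. Moreover, $M$ is free exactly when the identity map of $M$ factors through a free module, i.e.\ when the stable endomorphism module $\ul{\Hom}_R(M,M)=\widehat{\Ext}^0_R(M,M)$ vanishes. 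Finally, since $R$ is Cohen--Macaulay one has $\height\fp+\dim(R/\fp)=d$ for every $\fp\in\Spec R$, so $X^{d-1}(R)=\Spec R\setminus\{\fm\}$; thus (i) forces $M_\fp$ to be free for every nonmaximal prime $\fp$, whence $\ul{\Hom}_R(M,M)$, and likewise each $\Ext^i_R(M,M)$ with $i\ge1$, is supported only at $\fm$ and so has finite length.

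Next I would pass to the $\fm$-adic completion in order to bring Auslander--Reiten duality into force. Then $\widehat R$ is Gorenstein of dimension $d$, $\widehat M$ is maximal Cohen--Macaulay, $\Ext^{d-1}_{\widehat R}(\widehat M,\widehat M)\cong\Ext^{d-1}_R(M,M)=0$, and $M$ is free over $R$ if and only if $\widehat M$ is free over $\widehat R$. Crucially, hypothesis (i) persists in the form actually needed: $\ul{\Hom}_{\widehat R}(\widehat M,\widehat M)\cong\ul{\Hom}_R(M,M)\otimes_R\widehat R$ is still of finite length, equivalently $\widehat M$ is locally free on the punctured spectrum of $\widehat R$. (The non-free locus of $M$ is the support of the finitely generated module $\ul{\Hom}_R(M,M)$, and this behaves well under completion irrespective of the singular locus of $R$ itself.) So we may assume $R$ complete, hence Henselian with canonical module $R$.

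The core of the argument is a single application of Auslander--Reiten duality. Over the complete Gorenstein local ring $R$ of dimension $d$, with $M$ maximal Cohen--Macaulay and locally free on the punctured spectrum, it furnishes a functorial isomorphism
\[ \ul{\Hom}_R(M,M)^{\vee}\;\cong\;\Ext^{1}_R(M,\tau M), \]
where $(-)^{\vee}=\Hom_R(-,\E(R/\fm))$ is the Matlis dual and $\tau M=\Hom_R(\Omega^{d}\Tr M,R)$ is the Auslander--Reiten translate. Over a Gorenstein ring one has, in the stable category, $\Tr M\cong\Omega^{-2}\Hom_R(M,R)$, and $\Hom_R(-,R)$ interchanges syzygies with cosyzygies on totally reflexive modules; hence $\tau M\cong\Omega^{\,2-d}M$, and therefore
\[ \Ext^1_R(M,\tau M)\;\cong\;\widehat{\Ext}^1_R(M,\Omega^{\,2-d}M)\;\cong\;\widehat{\Ext}^{\,d-1}_R(M,M)\;=\;\Ext^{d-1}_R(M,M), \]
the last equality because $d-1\ge1$. (Equivalently: the Serre functor of the stable category of maximal Cohen--Macaulay $R$-modules is $\Omega^{-(d-1)}$, so that category is $(d-1)$-Calabi--Yau.) Combining with (iii) gives $\ul{\Hom}_R(M,M)^{\vee}\cong\Ext^{d-1}_R(M,M)=0$; since $\ul{\Hom}_R(M,M)$ has finite length this forces $\ul{\Hom}_R(M,M)=0$, i.e.\ $M$ is free.

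The one substantive ingredient --- and the step I expect to be the main obstacle --- is Auslander--Reiten duality in precisely this shape: identifying the Auslander--Reiten translate over a Gorenstein ring with the syzygy shift $\Omega^{2-d}$, and thereby pinning the relevant cohomological degree to $d-1$, where an off-by-one would be fatal. The remaining ingredients --- Tate cohomology agreeing with ordinary Ext in positive degrees, the finite-length and completion bookkeeping, and the fact that the syzygy operator is an autoequivalence of the stable category over a Gorenstein ring --- are routine. One may also avoid the triangulated-category language: write $\Ext^{d-1}_R(M,M)\cong\Ext^1_R(\Omega^{d-2}M,M)$, apply the classical Auslander--Reiten isomorphism $\ul{\Hom}_R(Y,X)^{\vee}\cong\Ext^1_R(X,\tau Y)$ with $X=Y=\Omega^{d-2}M$ (so that $\tau Y\cong M$), and use $\ul{\Hom}_R(\Omega^{d-2}M,\Omega^{d-2}M)\cong\ul{\Hom}_R(M,M)$ to reach the same conclusion.
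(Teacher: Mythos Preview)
Your argument is correct and is essentially Araya's original proof from \cite{Ar}: reduce to the complete case, invoke Auslander--Reiten duality for the stable category of maximal Cohen--Macaulay modules, identify the Serre functor as $\Omega^{-(d-1)}$ (equivalently $\tau\cong\Omega^{2-d}$), and read off $\ul{\Hom}_R(M,M)^{\vee}\cong\Ext^{d-1}_R(M,M)=0$.

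The paper, however, does not reprove Theorem~\ref{Tokuji} this way. Instead it deduces it from the more general Theorem~\ref{t1} (taking $n=d-1$ in Corollary~\ref{cormain}), whose proof avoids the triangulated machinery and the Gorenstein-specific identification of $\tau$. The paper works directly with $N=M^{\ast}$ and the isomorphism $\ul{\Hom}_R(N,N)\cong\Tor_1^R(\Tr N,N)$, then uses a third-quadrant spectral sequence $\Ext^p_R(\Tor_q^R(\Tr N,N),\omega)\Rightarrow\Ext^{p+q}_R(\Tr N,N^{\dagger})$ together with local duality to obtain
\[
\Ext^{d-1}_R(M,(M^{\ast})^{\dagger})^{\vee}\ \cong\ \Tor_1^R(\Tr N,N)\ \cong\ \ul{\Hom}_R(N,N).
\]
The two arguments are cousins---the paper's chain of isomorphisms is, at bottom, a hand-built instance of Auslander--Reiten duality---but the packaging matters: your route needs the Gorenstein hypothesis to make the stable category triangulated and to identify $\tau$ with a syzygy shift, whereas the paper's spectral-sequence/local-duality argument only needs a Cohen--Macaulay ring with canonical module, which is precisely what allows the generalization to Theorem~\ref{t1}. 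Conversely, your approach is conceptually cleaner in the Gorenstein case and makes transparent \emph{why} the critical degree is $d-1$ (it is the Calabi--Yau dimension of the stable category), a fact that is somewhat hidden in the paper's computation.
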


Ono and Yoshino \cite{OnYos} relaxed the condition that $M$ is free on $X^{d-1}(R)$ in Araya's theorem under the hypothesis that $\Ext^i_R(M,M)$ vanishes for $i=d-2$ and $i=d-1$. More precisely they proved:

\begin{thm} (Ono and Yoshino \cite{OnYos}) \label{OY}
Let $R$ be a Gorenstein local ring of dimension $d\geq 3$ and let $M\in \md R$ be a module. Then $M$ is free provided that the following hold:
\begin{enumerate}[(i)]
\item $M$ is locally free on $X^{d-2}(R)$.
\item $M$ is maximal Cohen-Macaulay.
\item $\Ext^{d-2}_R(M, M)=\Ext^{d-1}_R(M, M)=0$.
\end{enumerate}
\end{thm}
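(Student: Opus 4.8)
The plan is to derive Theorem~\ref{OY} from Araya's Theorem~\ref{Tokuji} by a localization argument that runs Araya's theorem once in dimension $d-1$ and once more in dimension $d$. The guiding observation is that applying Theorem~\ref{Tokuji} at a localization $R_\fp$ with $\dim R_\fp=d-1$ requires the vanishing of $\Ext^{(d-1)-1}_{R_\fp}(M_\fp,M_\fp)=\Ext^{d-2}_{R_\fp}(M_\fp,M_\fp)$, and this is exactly what the extra hypothesis $\Ext^{d-2}_R(M,M)=0$ supplies after localization.

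First I would reduce the theorem to the assertion that $M$ is locally free on $X^{d-1}(R)$. Granting this, Theorem~\ref{Tokuji} applies to $(R,M)$ --- condition (i) is the assertion, condition (ii) is hypothesis (ii), condition (iii) holds since $\Ext^{d-1}_R(M,M)=0$, and $d\ge 3\ge 2$ --- so $M$ is free. It therefore remains to fix a prime $\fp$ with $\height\fp=d-1$ and prove that $M_\fp$ is $R_\fp$-free, and here we may assume $M_\fp\neq 0$. Since $R$ is Cohen-Macaulay, hence catenary and equidimensional, $R_\fp$ is Gorenstein with $\dim R_\fp=d-1\ge 2$ and $\dim R/\fp=d-\height\fp=1$. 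Because $M$ is maximal Cohen-Macaulay we have $\depth_R M=d$, so combining the standard inequality $\depth_{R_\fp}M_\fp+\dim R/\fp\ge\depth_R M$ with $\depth_{R_\fp}M_\fp\le\dim R_\fp=d-1$ gives $\depth_{R_\fp}M_\fp=d-1=\dim R_\fp$; thus $M_\fp$ is maximal Cohen-Macaulay over $R_\fp$.

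It then remains to check the other hypotheses of Theorem~\ref{Tokuji} for $(R_\fp,M_\fp)$, with the role of the dimension played by $d-1$. Every non-maximal prime of $R_\fp$ has the form $\fq R_\fp$ with $\fq\subsetneq\fp$, so $\height\fq\le d-2$ and $M_\fq$ is $R_\fq$-free by hypothesis (i); this says precisely that $M_\fp$ is locally free on $X^{(d-1)-1}(R_\fp)=X^{d-2}(R_\fp)$. Also $\Ext^{d-2}_{R_\fp}(M_\fp,M_\fp)=\bigl(\Ext^{d-2}_R(M,M)\bigr)_\fp=0$ since $M$ is finitely generated. Hence Theorem~\ref{Tokuji} yields that $M_\fp$ is free. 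As $\fp$ was an arbitrary prime of height $d-1$, and hypothesis (i) already covers the primes of height at most $d-2$, we conclude that $M$ is locally free on $X^{d-1}(R)$; as noted above, Theorem~\ref{Tokuji} applied to $R$ itself then shows that $M$ is free.

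Given Theorem~\ref{Tokuji}, the argument is largely bookkeeping, and I expect no serious obstacle. The two points that need a little care are the verification that $M_\fp$ stays maximal Cohen-Macaulay over $R_\fp$ --- which relies on $R$ being Cohen-Macaulay, so that it is catenary and equidimensional and the dimension counts add up correctly --- and the numerology: the hypotheses are calibrated so that every relevant localization $R_\fp$ has dimension at least $2$, which is exactly why one must assume $d\ge 3$ rather than $d\ge 2$. The only genuinely new input beyond Araya's theorem is the second Ext-vanishing, and it is consumed at the localized level.
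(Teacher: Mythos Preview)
Your argument is correct. Bootstrapping from Araya's Theorem~\ref{Tokuji} by first localizing at height $d-1$ primes (using the vanishing of $\Ext^{d-2}$) and then applying it globally (using the vanishing of $\Ext^{d-1}$) is a clean two-step induction, and the verifications you give are sound.

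This is, however, a genuinely different route from the paper's. The paper does \emph{not} invoke Theorem~\ref{Tokuji} at all; it proves the more general Theorem~\ref{t1} over Cohen--Macaulay local rings with canonical module, using the Auslander transpose, the spectral sequence isomorphism of~\ref{CDCT}, and local duality to show $\underline{\Hom}_R(N,N)=0$ for $N=M^\ast$, and then recovers Theorem~\ref{OY} as the special case $n=d-2$ of Corollary~\ref{cormain}. In fact the paper explicitly remarks that one \emph{can} derive Theorem~\ref{OY} from Araya's argument, but deliberately avoids doing so in order to obtain a proof that works beyond the Gorenstein setting. Your approach is more elementary and uses Theorem~\ref{Tokuji} as a black box, which is attractive; its cost is that it is confined to Gorenstein rings and does not yield the Cohen--Macaulay generalization that is the paper's main point.
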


We set $(-)^{\ast}=\Hom_R(-,R)$, $(-)^{\vee}=\Hom(-, E)$, where $E$ is the injective hull of the residue field of $R$, and $(-)^{\dagger}=\Hom_R(-,\omega)$ where $\omega$ is a canonical module of $R$.
Recall that $M\in \md R$ is said to satisfy $(S_2)$ if $\depth_{R_{\fp}}(M_{\fp})\geq \{2, \height_R(\fp)\}$ for all $\fp \in \Supp(M)$.

The main aim of this paper is to prove the following result.

\begin{thm}\label{t1}
	Let $R$ be a Cohen-Macaulay local ring of dimension $d$ with
	canonical module $\omega$ and let $M\in \md R$. Assume $n$ is an integer with $1\le n\leq d-1$. Then $M$ is free provided that the following hold:
	\begin{enumerate}[\rm(i)]
		\item $\pd_{R_{\fp}}(M_{\fp})<\infty$ for all $\fp \in X^{n}(R)$.
		\item $M$ satisfies $(S_2)$ and $M^{\ast}$ is maximal Cohen-Macaulay.
		\item $\Ext^{i}_R(M,(M^{\ast})^{\dagger})=0$ for all $i=n, \ldots, d-1$.
	\end{enumerate}
	%Then $M$ is free.
\end{thm}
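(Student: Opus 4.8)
The strategy is to reduce to the case $n=d-1$ --- which, over a Gorenstein ring, is precisely Theorem~\ref{Tokuji} --- and to settle that case by translating the vanishing hypothesis, via local duality, into a depth statement about $M^{\ast}\otimes_R M$ that forces the evaluation map $M^{\ast}\otimes_R M\to\Hom_R(M,M)$ to be surjective. The first thing I would do is observe that (i) and (ii) already pin down the non-free locus: $M_{\fp}$ is free for every $\fp\in X^{n}(R)$. Fix such a $\fp$; since $M$ satisfies $(S_2)$ it is reflexive, so $M_{\fp}\cong (M_{\fp})^{\ast\ast}$ and $(M_{\fp})^{\ast}\cong\Omega^{2}\Tr M_{\fp}$ up to free summands. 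From $\pd_{R_{\fp}}M_{\fp}<\infty$ one gets that the Gorenstein dimension of $M_{\fp}$, hence of $\Tr M_{\fp}$, hence of $(M_{\fp})^{\ast}$, is finite; as $(M_{\fp})^{\ast}=(M^{\ast})_{\fp}$ is maximal Cohen--Macaulay, the Auslander--Bridger formula forces it to have Gorenstein dimension $0$, so $\Ext^{i}_{R_{\fp}}(M_{\fp},R_{\fp})\cong\Ext^{i}_{R_{\fp}}((M_{\fp})^{\ast\ast},R_{\fp})=0$ for $i>0$. Then $\Rhom_{R_{\fp}}(M_{\fp},R_{\fp})\simeq(M_{\fp})^{\ast}$, which is therefore perfect, so $(M_{\fp})^{\ast}$ has finite projective dimension; being also of Gorenstein dimension $0$ it is free, and hence so is $M_{\fp}\cong(M_{\fp})^{\ast\ast}$. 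In particular $\Tor^{R}_{\geq1}(M,M^{\ast})$, $\Ext^{\geq1}_{R}(M,M^{\ast})$ and $\coker\big(M^{\ast}\otimes_R M\to\Hom_R(M,M)\big)$ are supported in codimension $\geq n+1$.

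Next I would reduce to $n=d-1$ by induction on $d-n$. If $d-n\geq2$, pick $\fp$ with $\height\fp=n+1$; then $R_{\fp}$ is Cohen--Macaulay of dimension $n+1$ with canonical module, $M_{\fp}$ satisfies $(S_2)$, $(M_{\fp})^{\ast}$ is maximal Cohen--Macaulay, $M_{\fp}$ is free on the punctured spectrum of $R_{\fp}$ by the previous step, and localizing (iii) gives $\Ext^{n}_{R_{\fp}}\big(M_{\fp},((M_{\fp})^{\ast})^{\dagger}\big)=0$, which is exactly the instance ``$n=\dim R_{\fp}-1$'' of the theorem for $R_{\fp}$. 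Granting that instance, $M_{\fp}$ is free; letting $\fp$ run over all height-$(n+1)$ primes shows $M$ is free on $X^{n+1}(R)$, and since (iii) still provides the vanishing for $i=n+1,\dots,d-1$, the inductive hypothesis applied with $n$ replaced by $n+1$ finishes the reduction. So we may assume $n=d-1$, i.e.\ $M$ is free on the punctured spectrum of $R$.

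For the case $n=d-1$: $M$ is reflexive, free on the punctured spectrum, $M^{\ast}$ is maximal Cohen--Macaulay, and $\Ext^{d-1}_{R}(M,(M^{\ast})^{\dagger})=0$. Because $M^{\ast}$ is maximal Cohen--Macaulay, $\Rhom_R(M^{\ast},\omega)\simeq(M^{\ast})^{\dagger}$, hence $\Rhom_R\big(M,(M^{\ast})^{\dagger}\big)\simeq\Rhom_R\big(M\tensor_R M^{\ast},\omega\big)$, and Grothendieck local duality identifies $H^{j}_{\fm}\big(M\tensor_R M^{\ast}\big)$ with $\big(\Ext^{d-j}_R(M,(M^{\ast})^{\dagger})\big)^{\vee}$. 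Since $\Tor^{R}_{\geq1}(M,M^{\ast})$ has finite length, the spectral sequence $E_2^{p,q}=H^p_{\fm}(H^q(M\tensor_R M^{\ast}))\Rightarrow H^{p+q}_{\fm}(M\tensor_R M^{\ast})$ has, in total degree $1$, only the term $H^1_{\fm}(M\otimes_R M^{\ast})$; thus the hypothesis says exactly that $H^1_{\fm}(M\otimes_R M^{\ast})=0$, i.e.\ the torsion-free quotient $\overline{M\otimes_R M^{\ast}}:=(M\otimes_R M^{\ast})/H^0_{\fm}(M\otimes_R M^{\ast})$ has depth $\geq2$. Let $\mu\colon M^{\ast}\otimes_R M\to\Hom_R(M,M)$ be $\phi\otimes m\mapsto(x\mapsto\phi(x)m)$; it is an isomorphism on the punctured spectrum, so $\coker\mu$ has finite length and $\ker\mu=H^0_{\fm}(M\otimes_R M^{\ast})$ (the target has positive depth), whence $\mu$ induces a monomorphism $\overline{M\otimes_R M^{\ast}}\hookrightarrow\Hom_R(M,M)$ with cokernel $\coker\mu$. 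As $\depth_R\Hom_R(M,M)\geq\min\{2,\depth_R M\}=2$ (this is where $(S_2)$ is used), the depth lemma applied to $0\to\overline{M\otimes_R M^{\ast}}\to\Hom_R(M,M)\to\coker\mu\to0$ gives $\depth_R\coker\mu\geq\min\{2,2-1\}=1$; since $\coker\mu$ has finite length, $\coker\mu=0$. Thus $\mu$ is surjective, so writing $\mathrm{id}_M=\mu(\sum_i\phi_i\otimes m_i)$ exhibits $M$ as a direct summand of a finite free module, and $M$ is free.

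The main obstacles I anticipate are: (a) making Step~1 rigorous --- the passage of finite Gorenstein dimension from $M_{\fp}$ to $\Tr M_{\fp}$ and the collapse of Gorenstein dimension and projective dimension once the dual is maximal Cohen--Macaulay; and (b) the derived/local-duality identification in Step~3, specifically checking that the single vanishing $\Ext^{d-1}_{R}(M,(M^{\ast})^{\dagger})=0$ translates cleanly into $H^1_{\fm}(M\otimes_R M^{\ast})=0$. The latter is precisely the point at which the hypothesis ``$M^{\ast}$ is maximal Cohen--Macaulay'' is indispensable --- it is what makes $\Hom_R(M^{\ast},\omega)$ compute $\Rhom_R(M^{\ast},\omega)$, so that $(M^{\ast})^{\dagger}$ is the ``right'' object to pair with $M$ --- and one must verify carefully that the higher $\Tor$ corrections really drop out of the spectral sequence. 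Once these are in place, the bootstrapping in Step~2 and the trace-map endgame in Step~3 are routine.
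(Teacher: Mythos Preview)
Your argument is correct, and its skeleton---show $M$ is locally free on $X^{n}(R)$, then induct on height to reduce to the punctured-spectrum case, then invoke local duality---matches the paper's. The two proofs diverge in how they exploit the single vanishing $\Ext^{d-1}_R(M,(M^{\ast})^{\dagger})=0$ once $M$ is free on the punctured spectrum. The paper sets $N=M^{\ast}$, uses $N^{\ast}\approx\Omega^{2}\Tr N$ to rewrite the hypothesis as $\Ext^{d+1}_R(\Tr N,N^{\dagger})=0$, and then a collapsing spectral sequence (their 2.4) together with local duality gives
\[
0=\Ext^{d}_R(\Tor_1^R(\Tr N,N),\omega)^{\vee}\cong\Tor_1^R(\Tr N,N)\cong\underline{\Hom}_R(N,N),
\]
so $N$ is free and hence so is $M\cong N^{\ast}$. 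You instead apply derived local duality to $M\otimes_R^{\mathbf L}M^{\ast}$ to obtain $H^{1}_{\fm}(M\otimes_R M^{\ast})=0$ and finish with the evaluation map $M^{\ast}\otimes_R M\to\Hom_R(M,M)$ and a depth count. Both routes hinge on the same spectral-sequence collapse (the higher $\Tor$'s have finite length), but the paper's packaging via the transpose and stable $\Hom$ is slightly slicker because it lands directly on the freeness criterion $\underline{\Hom}_R(N,N)=0$; your trace-map ending is more hands-on but equally valid. Your Step~1 is also more circuitous than necessary: the paper simply notes that $\pd_{R_{\fp}}(\Tr N)_{\fp}<\infty$ and $N_{\fp}$ is maximal Cohen--Macaulay force $\Tor_1^{R_{\fp}}(\Tr N,N)_{\fp}=0$ (their 2.3), hence $N_{\fp}$ is free---no detour through Gorenstein dimension is needed.
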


An immediate consequence of Theorem \ref{t1} is:

\begin{cor}\label{cormainCM}Let $R$ be a Cohen-Macaulay normal local domain of dimension $d$ with
canonical module $\omega$ and let $M\in \md R$. Then $M$ is free provided that the following hold:
\begin{enumerate}[\rm(i)]
\item $M$ satisfies $(S_2)$ and $M^{\ast}$ is maximal Cohen-Macaulay.
\item $\Ext^{i}_R(M,(M^{\ast})^{\dagger})=0$ for all $i=1, \ldots, d-1$.
\end{enumerate}
\end{cor}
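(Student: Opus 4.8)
The plan is to obtain Corollary~\ref{cormainCM} as the special case $n=1$ of Theorem~\ref{t1}. First I would observe that one may assume $d\geq 2$: if $d=0$ then $R$ is a field and $M$ is automatically free, while if $d=1$ then $R$, being a normal Noetherian local domain of dimension one, is a discrete valuation ring, and the hypothesis that $M$ satisfies $(S_2)$ forces $\depth_R M\geq 1$, so $M$ is torsion-free and hence free over the DVR $R$. So we may assume $d\geq 2$, and then $n=1$ lies in the admissible range $1\le n\le d-1$ of Theorem~\ref{t1}.

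Next I would verify the three hypotheses of Theorem~\ref{t1} for this choice of $n$. Hypotheses (ii) and (iii) of that theorem are, word for word, hypotheses (i) and (ii) of the corollary, so only hypothesis (i) of Theorem~\ref{t1} remains to be checked, namely that $\pd_{R_{\fp}}(M_{\fp})<\infty$ for every $\fp\in X^{1}(R)$. This is precisely where normality is used: a normal Noetherian ring satisfies Serre's condition $(R_1)$, so $R_{\fp}$ is a regular local ring whenever $\height(\fp)\le 1$; since finitely generated modules over regular local rings have finite projective dimension, we obtain $\pd_{R_{\fp}}(M_{\fp})<\infty$ for all $\fp\in X^{1}(R)$. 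Theorem~\ref{t1} then applies and yields that $M$ is free, completing the proof.

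I do not anticipate a genuine obstacle here, since the corollary is a direct specialization of Theorem~\ref{t1}; the only point deserving a line of care is the degenerate case $d\le 1$, which falls outside the hypothesis $1\le n\le d-1$ and must be disposed of separately by the elementary argument above. The single nontrivial ingredient is the standard implication ``normal $\Rightarrow$ $(R_1)$ $\Rightarrow$ regular in codimension $\le 1$'', which turns the finite projective dimension hypothesis (i) of Theorem~\ref{t1} into an automatic condition.
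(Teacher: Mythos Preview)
Your proposal is correct and follows the same route as the paper: the corollary is stated there as ``an immediate consequence of Theorem~\ref{t1}'' with no further proof, the implicit point being precisely your observation that normality gives $(R_1)$, hence regularity (and thus finite projective dimension) on $X^1(R)$, so Theorem~\ref{t1} applies with $n=1$. Your separate treatment of the cases $d\le 1$ is a welcome bit of care that the paper leaves to the reader.
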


As maximal Cohen-Macaulay modules satisfy $(S_2)$ over Gorenstein rings, we deduce from Theorem \ref{t1} that:

\begin{cor}\label{cormain}
Let $R$ be a Gorenstein local ring of dimension $d$ and let $M\in \md R$. Assume $n$ is an integer with $1\le n\leq d-1$. Then $M$ is free provided that the following hold:
\begin{enumerate}[\rm(i)]
\item $M$ is locally free on $X^{n}(R)$.
\item $M$ is maximal Cohen-Macaulay.
\item $\Ext^{i}_R(M,M)=0$ for all $i=n, \ldots, d-1$.
\end{enumerate}
\end{cor}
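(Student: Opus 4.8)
The plan is to deduce Corollary~\ref{cormain} from Theorem~\ref{t1} by checking that the three hypotheses of the latter are satisfied. Since $R$ is Gorenstein, it is Cohen-Macaulay and $\omega=R$ is a canonical module, so that $(-)^{\dagger}=\Hom_R(-,\omega)$ coincides with $(-)^{\ast}=\Hom_R(-,R)$. Note also that if $d\le 1$ there is no integer $n$ with $1\le n\le d-1$, so we may assume $d\ge 2$ and that $n$ is such an integer; we will apply Theorem~\ref{t1} with this same $n$. Also, if $M=0$ it is free, so assume $M\neq0$.

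First I would verify condition (i) of Theorem~\ref{t1}. For $\fp\in X^{n}(R)$, hypothesis (i) of the corollary says $M_{\fp}$ is free over $R_{\fp}$, whence $\pd_{R_{\fp}}(M_{\fp})=0<\infty$. Next, for condition (ii): since $M$ is maximal Cohen-Macaulay over the Cohen-Macaulay ring $R$, the localization $M_{\fp}$ is maximal Cohen-Macaulay over $R_{\fp}$ for every $\fp\in\Supp(M)$, so $\depth_{R_{\fp}}(M_{\fp})=\dim R_{\fp}=\height_R(\fp)\ge\min\{2,\height_R(\fp)\}$; thus $M$ satisfies $(S_2)$. Moreover, applying $\Hom_R(-,\omega)$ to a maximal Cohen-Macaulay module yields a maximal Cohen-Macaulay module, and here $\omega=R$, so $M^{\ast}$ is maximal Cohen-Macaulay.

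Finally, for condition (iii) of Theorem~\ref{t1}: because $M$ is maximal Cohen-Macaulay over a Gorenstein ring, it is totally reflexive, so the biduality map $M\to M^{\ast\ast}$ is an isomorphism. Since $\omega=R$ we get $(M^{\ast})^{\dagger}=(M^{\ast})^{\ast}=M^{\ast\ast}\cong M$, and therefore the vanishing $\Ext^{i}_R(M,(M^{\ast})^{\dagger})=0$ for $i=n,\dots,d-1$ required in Theorem~\ref{t1}(iii) is precisely hypothesis (iii) of the corollary. Having checked (i)--(iii) of Theorem~\ref{t1}, that theorem gives that $M$ is free.

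I do not anticipate a genuine obstacle here: the whole argument is a translation of the hypotheses via the identifications $\omega=R$ and $M\cong M^{\ast\ast}$. The only points needing (standard) care are that the maximal Cohen-Macaulay property is preserved under $\Hom_R(-,\omega)$ and under localization over a Cohen-Macaulay ring, and that maximal Cohen-Macaulay modules over a Gorenstein ring are reflexive; all of these are classical, so the proof should be short.
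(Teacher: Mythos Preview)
Your proposal is correct and follows essentially the same route as the paper: the paper simply observes that for a maximal Cohen-Macaulay module $M$ over a Gorenstein ring one has $M$ satisfying $(S_2)$, $M^{\ast}$ maximal Cohen-Macaulay, and $(M^{\ast})^{\dagger}\cong M$, so Corollary~\ref{cormain} is immediate from Theorem~\ref{t1}. Your write-up just spells out these identifications (and the trivial fact that locally free implies locally of finite projective dimension) in more detail.
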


Note that we recover heorems \ref{Tokuji} and \ref{OY} from Corollary \ref{cormain} by letting $n=d-1$ and $n=d-2$, respectively. Corollary 1.6 especially yields an extension of a result of Huneke and Leuschke \cite{HL} mentioned preceding Theorem \ref{Tokuji}. More precisely we obtain the following; see Corollary \ref{cor1}.

\begin{cor} \label{corintro} Let $R$ be a $d$-dimensional Gorenstein local normal domain and let $M$ be a maximal Cohen-Macaulay $R$-module. Then $M$ is free if and only if $\Ext^i_R(M,M)=0$ for all $i=1, \ldots, d-1$.
\end{cor}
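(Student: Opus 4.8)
The plan is to deduce Corollary \ref{corintro} directly from Corollary \ref{cormain} by taking $n=1$. First I would observe that the ``only if'' direction is trivial: if $M$ is free then $\Ext^i_R(M,M)=0$ for all $i\geq 1$. For the ``if'' direction, suppose $M$ is a maximal Cohen-Macaulay $R$-module with $\Ext^i_R(M,M)=0$ for $i=1,\dots,d-1$. I want to invoke Corollary \ref{cormain} with $n=1$, so the only thing left to check is hypothesis (i) there, namely that $M$ is locally free on $X^1(R)$.

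The key point is that a Gorenstein normal domain is regular in codimension $1$ (normality gives $(R_1)$, i.e.\ $R_\fp$ is regular for every prime $\fp$ of height $\leq 1$). Hence for $\fp\in X^1(R)$ the local ring $R_\fp$ is regular, so $M_\fp$ has finite projective dimension over $R_\fp$; since $M$ is maximal Cohen-Macaulay, $M_\fp$ is maximal Cohen-Macaulay over $R_\fp$, and by the Auslander--Buchsbaum formula $\pd_{R_\fp}(M_\fp)=\depth R_\fp-\depth_{R_\fp}(M_\fp)=0$, so $M_\fp$ is free. Thus $M$ is locally free on $X^1(R)$, and all three hypotheses of Corollary \ref{cormain} (with $n=1$) are satisfied, yielding that $M$ is free.

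I should also address the degenerate small-dimensional cases so that the indexing $1\le n\le d-1$ makes sense. If $d=0$ then $R$ is a field and every module is free; if $d=1$ then $R$ is regular (a one-dimensional normal domain is a DVR) and again every maximal Cohen-Macaulay module, hence $M$, is free, while the Ext condition ``for all $i=1,\dots,d-1$'' is vacuous but the conclusion still holds. So I would dispatch $d\le 1$ separately and apply the argument above when $d\geq 2$.

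I do not anticipate a serious obstacle here; the content has been front-loaded into Corollary \ref{cormain}. The only mild subtlety is making sure the normality hypothesis is used in exactly the right place --- to get regularity in codimension one so that (i) of Corollary \ref{cormain} holds for $n=1$ --- and noting that Serre's criterion $(R_1)+(S_2)$ is what ``normal domain'' buys us, of which we only need the $(R_1)$ half (the $(S_2)$ half is automatic since $R$ is Cohen-Macaulay). This is exactly the mechanism by which the present result extends the Huneke--Leuschke theorem from the case $d=2$ (where their hypothesis ``$\Ext^{d-1}=\Ext^1=0$'' already appears in Theorem \ref{Tokuji}) to arbitrary dimension.
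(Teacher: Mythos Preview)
Your proposal is correct and follows essentially the same route as the paper. The paper phrases the derivation as the Gorenstein, $n=1$ case of Corollary~\ref{cor1} rather than of Corollary~\ref{cormain}, but in the Gorenstein setting (where $\omega\cong R$ and a maximal Cohen--Macaulay module is totally reflexive) these two corollaries coincide; your explicit use of $(R_1)$ plus Auslander--Buchsbaum to verify local freeness on $X^1(R)$, and your separate treatment of $d\le 1$, are exactly the checks the paper leaves implicit.
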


Let us remark that Huneke and Leuschke \cite[3.1]{HL} obtains the conclusion of Corollary \ref{corintro}, when $\Ext^i_R(M,M)$ vanishes for all $i=1, \ldots, d$; see the discussion following Corollary \ref{cor1}. Let us also remark that one can in fact prove Theorem \ref{OY} and Corollary \ref{corintro} by using the proof of Theorem \ref{Tokuji} given by Araya; see \cite{Ar}. Our main argument is more general than both of these results; it is quite short and works over Cohen-Macaulay rings that are not necessarily Gorenstein. Hence we will deduce Theorem \ref{OY} and Corollary \ref{corintro} as immediate corollaries of Theorem \ref{t1} in the next section without making use of the proof of Theorem \ref{Tokuji}; see also Corollary \ref{c1}.

\section{Main result and corollaries}
In this section we give a proof of our main result, Theorem \ref{t1}. Following our proof we state two corollaries, one of which extends the result of Huneke and Leuschke \cite{HL} mentioned preceding Theorem \ref{Tokuji}. We start with a few notations and preliminary results.

\begin{chunk} \label{Triso} Let $M,N \in \md R$. We denote by $\underline{\Hom}_R(M,N)$ the residue of $\Hom_R(M,N)$ by the $R$-submodule consisting of the $R$-module homomorphisms from $M$ to $N$ that factor through free modules.

It follows from the definition that $M$ is free if and only if $\underline{\Hom}_R(M,M)=0$. We also remark that $\underline{\Hom}_R (M, N) \cong \Tor_1^R (\Tr M, N)$, where $\Tr M$ is the (Auslander) transpose of $M$; see \cite[3.9]{Yo}.
\end{chunk}

\begin{chunk} \label{Sn} (\cite[1.4.1]{BH}) Let $R$ be a Cohen-Macaulay local ring such that $\depth(R)\geq 2$ and let $M\in \md R$. If $M$ satisfies $(S_2)$ and $\pd_{R_{\fp}}(M_{\fp})<\infty$ for all $\fp \in X^1(R)$, then $M$ is reflexive.
\end{chunk}

\begin{chunk} (\cite[2.2]{YKI}) \label{Yoshida} Let $R$ be a local ring and let $M,N \in \md R$. If $N$ is maximal Cohen-Macaulay and $\pd(M)<\infty$, then $\Tor_i^R(M,N)=0$ for all $i\geq 1$.
\end{chunk}

For our proof of Theorem \ref{t1}, we will make use \cite[3.1]{CeD2}, which is well-known over Artinian rings. Since we record an improved version of that result, we give a proof along with its statement.

\begin{chunk} (\cite[3.1]{CeD2}) \label{CDCT} Let $R$ be a $d$-dimensional Cohen-Macaulay local ring with a canonical module $\omega$ and let $M,N \in \md R$. If $\pd_{R_{\fp}}(M_{\fp})<\infty$ for all $\fp \in X^{d-1}(R)$ and $N$ is maximal Cohen-Macaulay, then the following isomorphism holds for all $i\geq 1$:
$$ \Ext_R^{d+i}(M,N^{\dagger}) \cong \Ext_{R}^d(\Tor_i^R(M,N),\omega).$$

To see this isomorphism, we note, by \cite[10.62]{Roit}, that there is a third quadrant spectral sequence:
$$\Ext^p_{R}(\Tor_q^R(M,N),\omega) \underset{p}{\Longrightarrow} \Ext_R^n(M,N^{\dag}) $$
Let $p\in X^{d-1}(R)$. Then, since $\pd_{R_{\fp}}(M_{\fp})<\infty$ and $N_p$ is maximal Cohen-Macaulay over $R_p$, we conclude that $\Tor_i^R(M,N)_{p}=0$ for all $i\geq 1$; see \ref{Yoshida}.  Therefore
$\Tor_q^R(M,N)$ has finite length for all $q>0$. Hence, unless $p=d$, $\Ext^p_{R}(\Tor_q^R(M,N),\omega)=0$. It follows that the spectral sequence considered collapses and hence gives the desired isomorphism.
\end{chunk}

We can now prove our main result.

\begin{proof} [A proof of Theorem \ref{t1}] Note, it follows from our hypotheses and \ref{Sn}, that $M$ is reflexive. Set $N=M^{\ast}$ and note $M \cong \Omega^2 \Tr N$. Therefore $\pd_{R_{\fp}}((\Tr N)_{\fp})<\infty$ for all $\fp \in X^{n}(R)$.

Let $\fp \in X^{n}(R)$. Then, since $N_{\fp}$ is maximal Cohen-Macaulay, it follows from \ref{Yoshida} that $\Tor_i^R(\Tr N,N)_{\fp}=0$ for all $i\geq 1$. In particular $\Tor_1^R(\Tr N,N)_{\fp}=0$, and this implies $N_{\fp}$ is free over $R_{\fp}$; see \ref{Triso}. Since $M$ is reflexive, we conclude that $M_{\fp}$ is free. Consequently $M$ is a reflexive module that is locally free on $X^{n}(R)$.

Let $\fp \in \Spec(R)$. We proceed by induction on $\height_R(\fp)$ and prove that $M_{\fp}$ is free over $R_{\fp}$. If $\height(\fp)\leq n$, then $M_{\fp}$ is free by the above argument. So we assume $\height(\fp)=t>n$. Localizing at $\fp$, we may assume $(R, \fm)$ is a Cohen-Macaulay local ring with a canonical module $\omega$, $\dim(R)=t> n\geq 1$, $M$ is reflexive and locally free on $X^{t-1}(R)$, $M^{\ast}$ is maximal Cohen-Macaulay and $\Ext^{t-1}_R(M,(M^{\ast})^{\dagger})=0$.

Note that $N^{\ast}=M^{\ast\ast}\cong M$ and $N^{\dagger}=(M^{\ast})^{\dagger}$ is maximal Cohen-Macaulay. Moreover $\Ext^{t-1}_R(N^*, N^\dagger)=0$ by the hypothesis. Hence the following isomorphisms hold:
\[\begin{array}{rl}\tag{\ref{t1}.1}
0=\Ext^{t-1}_R(N^{\ast}, N^\dagger)^{\vee} &\cong\Ext^{t+1}_R(\Tr N, N^{\dagger})^{\vee}\\
&\cong\Ext^t_R(\Tor_1^R(\Tr N,N),\omega)^{\vee}\\
&\cong\Gamma_{\fm}(\Tor_1^R(\Tr N,N))\\
&\cong\Tor_1^R(\Tr N,N)\\
&\cong\underline{\Hom}_R(N,N).
\end{array}\]
The first isomorphism of (\ref{t1}.1) follows from the fact that $N^{\ast} \approx\Omega^2\Tr N$, while the second and third ones follow from \ref{CDCT} and the Local Duality Theorem \cite[3.5.9]{BH}, respectively. The fourth isomorphism is due to the fact that $\Tor_1^R(\Tr N,N)$ has finite length. The fifth isomorphism, and the freeness of $N$, follows from \ref{Triso}. As $M$ is reflexive, we conclude that $M \cong N^{\ast}$ is free.
\end{proof}

If $R$ is a Gorenstein local ring and $M$ is a maximal Cohen-Macaulay $R$-module, then $M$ satisfies $(S_2)$, $M^{\ast}$ is maximal Cohen-Macaulay and $(M^{\ast})^{\dagger} \cong M$. Hence, as an immediate consequence of Theorem \ref{t1}, we obtain Corollary \ref{cormain}. 

To obtain another corollary of Theorem \ref{t1}, we apply the next result:

\begin{chunk} (\cite[B4]{ABS}) \label{obs} Let $R$ be a Cohen-Macaulay local ring of dimension $d$ with a canonical module $\omega$ and let $M\in \md R$. If $\Ext^i_R(M,R)=0$ for all $i=1, \ldots, d$, then $M\otimes_R\omega \cong (M^{\ast})^{\dagger}$.
\end{chunk}

\begin{cor} \label{cor1}
Let $R$ be a Cohen-Macaulay local ring of dimension $d$ with a canonical module $\omega$ and let $M\in \md R$. Assume $n$ is an integer with $1\le n\leq d-1$. Then $M$ is free provided that the following holds:
\begin{enumerate}[\rm(i)]
\item $\pd_{R_{\fp}}(M_{\fp})<\infty$ for all $\fp \in X^{n}(R)$.
\item $M$ is reflexive and $\Ext^i_R(M,R)=0$ for all $i=1, \ldots, d$ (e.g., $M$ is totally reflexive.)
\item $\Ext^{i}_R(M,M\otimes_R\omega)=0$ for all $i=n, \ldots, d-1$.
\end{enumerate}
%Then $M$ is free.
\end{cor}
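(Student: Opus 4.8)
The natural plan is to reduce Corollary~\ref{cor1} to Theorem~\ref{t1} by showing that hypotheses (i)--(iii) here imply (i)--(iii) of that theorem. Hypothesis (i) is identical, so I would not touch it. For hypothesis (iii), the first move is to apply Observation~\ref{obs}: since $M$ is reflexive and $\Ext^i_R(M,R)=0$ for $i=1,\dots,d$, it yields an isomorphism $M\otimes_R\omega\cong(M^{\ast})^{\dagger}$; hence the assumption $\Ext^i_R(M,M\otimes_R\omega)=0$ for $i=n,\dots,d-1$ becomes $\Ext^i_R(M,(M^{\ast})^{\dagger})=0$ for the same range, which is exactly hypothesis (iii) of Theorem~\ref{t1}.

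What is left is hypothesis (ii) of Theorem~\ref{t1}, namely that $M$ satisfies $(S_2)$ and that $M^{\ast}$ is maximal Cohen-Macaulay, and both should drop out of reflexivity together with the vanishing of $\Ext^i_R(M,R)$. For the first, I would recall that a reflexive module is a second syzygy: dualizing a free presentation $F_1\to F_0\to M^{\ast}\to 0$ and using $M\cong M^{\ast\ast}$ produces an exact sequence $0\to M\to F_0^{\ast}\to F_1^{\ast}$, so $M$ embeds as a second syzygy; localizing this sequence at $\fp\in\Supp M$ and applying the depth lemma to the two short exact sequences into which it decomposes gives $\depth_{R_{\fp}}M_{\fp}\ge\min\{2,\depth R_{\fp}\}=\min\{2,\height\fp\}$, since $R$ is Cohen-Macaulay. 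For the second, I would dualize a free resolution $\cdots\to F_1\to F_0\to M\to 0$ to get a complex $0\to M^{\ast}\to F_0^{\ast}\to F_1^{\ast}\to\cdots$ whose cohomology at $F_i^{\ast}$ is $\Ext^i_R(M,R)$ for $i\ge1$; the vanishing of these for $i=1,\dots,d-1$ makes the truncation $0\to M^{\ast}\to F_0^{\ast}\to\cdots\to F_{d-1}^{\ast}\to C\to 0$ exact for the appropriate cokernel $C$, so $M^{\ast}$ is a $d$-th syzygy, and iterating the depth lemma with $\depth R=d$ forces $\depth M^{\ast}\ge d$; thus $M^{\ast}$ is maximal Cohen-Macaulay.

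With (i)--(iii) of Theorem~\ref{t1} verified, that theorem gives that $M$ is free, which finishes the argument; the parenthetical case is immediate, since a totally reflexive module is reflexive and has $\Ext^i_R(M,R)=0$ for all $i\ge1$. I expect the only step needing genuine care to be the passage from the finite list of vanishing $\Ext^i_R(M,R)$ to the maximal Cohen-Macaulayness of $M^{\ast}$, but that is the routine syzygy-plus-depth-lemma computation indicated above, so in fact this corollary is essentially a reformulation of Theorem~\ref{t1} phrased in terms of $\Ext^i_R(M,R)$-vanishing and $M\otimes_R\omega$.
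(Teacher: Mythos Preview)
Your proposal is correct and follows essentially the same route as the paper: verify hypotheses (ii) and (iii) of Theorem~\ref{t1} via reflexivity (for $(S_2)$), the $d$th-syzygy argument (for $M^{\ast}$ maximal Cohen--Macaulay), and \ref{obs} (for $M\otimes_R\omega\cong(M^{\ast})^{\dagger}$), then invoke Theorem~\ref{t1}. The paper's proof is terser---it records only the syzygy step and cites Theorem~\ref{t1} and \ref{obs}---but your expanded justification of $(S_2)$ and the depth-lemma computation fills in exactly what is left implicit there.
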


\begin{proof} The vanishing of $\Ext^i_R(M,R)$ for all $i=1, \ldots, d$ forces $M^{\ast}$ to be a $d$th syzygy module, i.e., a maximal Cohen-Macaulay module. Therefore the conclusion follows from Theorem \ref{t1} and  \ref{obs}.
\end{proof}

We point out that the case where $R$ is Gorenstein and $n=1$ of Corollary \ref{cor1} yields Corollary \ref{corintro} advertised in the introduction.

Corollary \ref{cor1} allows us to generalize \cite[5.5]{OnYos}, another result of Ono and Yoshino.

\begin{cor}\label{c1}
%Let $R$ be a Gorenstein local ring of dimension $d$ and let $M\in \md R$. 
Let $R$ be a Cohen-Macaulay local ring of dimension $d$ with a canonical module $\omega$ and let $M\in \md R$.
Assume $n$ is an integer with $1\leq n \leq d-1$. Then $M$ is free provided that the following holds:
\begin{enumerate}[(i)]
 \item $M$ is locally free on $X^{d-n}(R)$.
 \item $M$ is reflexive and $\Ext^i_R(M,R)=0$ for all $i=1, \ldots, d$.
 %\item $M$ is maximal Cohen-Macaulay.
 \item $\depth_R(\Ext^{d-i}_R(M,M\otimes_R\omega))\geq i$ for all $i=1, \ldots, n$.
\end{enumerate}
\end{cor}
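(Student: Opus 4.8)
The plan is to reduce this statement to Corollary \ref{cor1} by showing that hypothesis (iii)---a depth condition on the high-degree $\Ext$ modules $\Ext^{d-i}_R(M, M\otimes_R\omega)$---together with the local-freeness hypothesis (i) forces the vanishing $\Ext^{i}_R(M, M\otimes_R\omega)=0$ for all $i$ in the range $d-n,\ldots,d-1$. Once that is established, conditions (i), (ii) of Corollary \ref{c1} give precisely conditions (i), (ii) of Corollary \ref{cor1} with $n$ replaced by $d-n$ (note $1\le d-n\le d-1$), and the derived vanishing supplies condition (iii) of Corollary \ref{cor1}, so $M$ is free.

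First I would set $N^\dagger = (M^\ast)^\dagger$ and use \ref{obs}: since $\Ext^i_R(M,R)=0$ for $i=1,\dots,d$, we have $M\otimes_R\omega\cong(M^\ast)^\dagger$, and $M^\ast$ is maximal Cohen-Macaulay (being a $d$th syzygy), so $M\otimes_R\omega$ is itself maximal Cohen-Macaulay. Next I would argue that the modules $\Ext^{j}_R(M, M\otimes_R\omega)$ have finite length for $j$ in the relevant range: hypothesis (i) says $M_\fp$ is free on $X^{d-n}(R)$, so $\Ext^{j}_R(M, M\otimes_R\omega)_\fp = 0$ whenever $\height\fp \le d-n$, i.e. the support of these $\Ext$ modules sits in $X^{d}(R)\setminus X^{d-n}(R)$; I would then need to push this support down further. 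Concretely, the strategy is a descending induction on $i = 1,\dots, n$ using the hypothesis $\depth_R(\Ext^{d-i}_R(M, M\otimes_R\omega))\ge i$ together with the fact that a module of depth $\ge i$ whose support has dimension $< i$ must vanish; the local-freeness input controls the support dimension. The cleanest way to run this is: first show $\Ext^{d-1}_R(M, M\otimes_R\omega)$ has finite length (its support avoids $X^{d-n}(R)\supseteq X^{1}(R)$ when $n<d$, hence... ) and depth $\ge 1$, forcing it to be zero; feed this vanishing back in via the standard fact that vanishing of $\Ext^{d-1}$ increases the codimension of the support of the next $\Ext$ module $\Ext^{d-2}_R(M, M\otimes_R\omega)$, which combined with $\depth \ge 2$ kills it; and iterate down to $\Ext^{d-n}_R(M, M\otimes_R\omega)$.

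The engine for the inductive step is the local statement: if $\Ext^{j+1}_R(M, M\otimes_R\omega)=0$ and $M_\fp$ is free for all $\fp$ with $\height\fp\le d-n$, then for a prime $\fp$ of height $h$ in the support of $\Ext^{j}_R(M, M\otimes_R\omega)$ one gets, after localizing, that $\Ext^{j}_{R_\fp}(M_\fp, (M\otimes\omega)_\fp)\ne 0$ while $\Ext^{j+1}_{R_\fp}(M_\fp, (M\otimes\omega)_\fp)=0$; since $M\otimes\omega$ is maximal Cohen-Macaulay (so $\depth(M\otimes\omega)_\fp = h$) and the relevant Ext-index $j$ is large, this constrains $h$ via the Auslander--Buchsbaum-type / rigidity behavior of Ext over Cohen-Macaulay rings, forcing $h \ge j+1 - (\text{something})$, i.e. pushing the support up in codimension exactly enough that the depth-$i$ hypothesis applies at stage $i = d-j$. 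Assembling these, after $n$ steps all of $\Ext^{d-n}_R(M,M\otimes_R\omega),\dots,\Ext^{d-1}_R(M,M\otimes_R\omega)$ vanish, and Corollary \ref{cor1} (applied with its integer parameter equal to $d-n$) finishes the proof.

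The main obstacle I anticipate is making the support-codimension bookkeeping in the descending induction precise---that is, correctly identifying the result that upgrades "$\Ext^{j+1}$ vanishes and $\Ext^j$ is supported only in large codimension" to "$\Ext^j$ has depth-defect small enough to contradict the $(S_\bullet)$-type depth hypothesis." This is where one must be careful about off-by-one errors in the indices $d-i$ versus $i$, and about whether one needs $M\otimes_R\omega$ to be maximal Cohen-Macaulay (which we have) or something stronger. Everything else---invoking \ref{obs}, checking $M^\ast$ is maximal Cohen-Macaulay, and the final appeal to Corollary \ref{cor1}---is routine.
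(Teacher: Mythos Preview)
Your overall strategy---reduce to Corollary~\ref{cor1} by extracting the needed $\Ext$-vanishing from the depth hypothesis---is the right one, but the induction you propose does not run. The base case already fails: local freeness on $X^{d-n}(R)$ only tells you that $\Ext^{d-1}_R(M,M\otimes_R\omega)$ is supported on primes of height $\ge d-n+1$, hence has dimension $\le n-1$, not that it has finite length. Combined with $\depth\ge 1$ this gives nothing unless $n=1$. The inductive engine you suggest (``vanishing of $\Ext^{j+1}$ forces the support of $\Ext^{j}$ into higher codimension via Auslander--Buchsbaum/rigidity'') is not a theorem; there is no general rigidity statement of that shape for $\Ext$ over Cohen--Macaulay rings, and you would need exactly such a statement to make the descending induction go.

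The paper avoids this entirely by arguing through the nonfree locus rather than trying to kill all of $\Ext^{d-n},\dots,\Ext^{d-1}$ at once. Suppose $M$ is not free and let $t=\height_R(I)$ where $V(I)=\nf(M)$; hypothesis~(i) forces $t\ge d-n+1$, so $i:=d-t+1$ lies in $\{1,\dots,n\}$ and hypothesis~(iii) gives $\depth_R\Ext^{t-1}_R(M,M\otimes_R\omega)\ge d-t+1$. On the other hand $M$ is locally free on $X^{t-1}(R)$ by the choice of $t$, so $\dim_R\Ext^{t-1}_R(M,M\otimes_R\omega)\le d-t$, and therefore $\Ext^{t-1}_R(M,M\otimes_R\omega)=0$. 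Now pick $\fp\in\nf(M)$ of height $t$ and apply Corollary~\ref{cor1} to $R_\fp$ with parameter $t-1$: all of its hypotheses localize (in particular only the single vanishing $\Ext^{t-1}_{R_\fp}=0$ is needed), so $M_\fp$ is free, contradicting $\fp\in\nf(M)$. The point you were missing is that the nonfree locus itself supplies the sharp dimension bound matching the depth hypothesis at exactly one index, and one then needs only a \emph{single} application of Corollary~\ref{cor1}, localized, rather than a global vanishing range.
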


\begin{proof} Suppose $M$ is not free. Then the nonfree locus $\nf(M)$ of $M$ is not empty. Let $\nf(M)=V(I)$ for some ideal $I$ of $R$ with $\height_R(I)=t$. Then it follows from (i) that $t\geq d-n+1$. Setting $i=d-t+1$, we obtain from (iii) that 
\begin{equation}\tag{\ref{c1}.1}
\depth_R(\Ext^{t-1}_R(M, M\otimes_R\omega))\geq d-t+1.
\end{equation}
On the other hand, since $X^{t-1}(R) \nsubseteq V(I)$, we see $M$ is locally free on $X^{t-1}(R)$. This implies
\begin{equation}\tag{\ref{c1}.2}
\dim_R(\Ext^{t-1}_R(M, M\otimes_R\omega))\leq d-t.
\end{equation}
Consequently, by (\ref{c1}.1) and (\ref{c1}.2), we conclude
$\Ext^{t-1}_R(M,M\otimes_R\omega)=0$. 

Let $\fp \in \nf(M)$ with $\height_R(\fp)=t$. Then $M_{\fp}$ is locally free on $X^{t-1}(R_{\fp})$, $M_{\fp}$ is reflexive over $R_{\fp}$, $\Ext^i_{R_{\fp}}(M_{\fp},R_{\fp})=0$ for all $i=1, \ldots, t$, and $\Ext^{t-1}_{R_\fp}(M_\fp,M_\fp \otimes_{R_{\fp}}\omega_{\fp})=0$. Therefore it follows from Corollary \ref{cor1} that $M_\fp$ is free over $R_{\fp}$, which is a contradiction. So $M$ is free.
\end{proof}

Huneke and Leuschke \cite[3.1]{HL} proved, when $R$ is a $d$-dimensional complete Cohen-Macaulay local ring such that $R_{\fp}$ is a complete intersection for all $\fp \in X^{1}(R)$, and $R$ is either Gorenstein or contains $\QQ$, a maximal Cohen-Macaulay $R$-module $M$ of constant rank is free provided $\Ext^i_R(M,R)=\Ext^i_R(M^{\ast},R)=0$ for all $i=1, \ldots, d$ and $\Ext^i_R(M,M)=0$ for all $i=1, \ldots, \max\{2, d\}$. Consequently, when $R$ is a Gorenstein normal domain of dimension $d\geq 2$ and $M$ is a maximal Cohen-Macaulay $R$-module, the result of Huneke and Leuschke \cite[3.1]{HL} requires the vanishing of $\Ext^i_R(M,M)=0$ for all $i=1, \ldots, d$ to conclude that $M$ is free, whilst Corollary \ref{corintro} requires the vanishing of $\Ext^i_R(M,M)=0$ for all $i=1, \ldots, d-1$ for the same conclusion under the same setup.

As Huneke and Leuschke \cite{HL} studied the Auslander and Reiten Conjecture for modules that have constant rank, it seems worth finishing this section with a related example: it shows that the hypothesis that $M$ is locally free on $X^{d-1}(R)$ cannot be removed from Corollary \ref{cormain} even if $M$ has constant rank.

\begin{eg} \label{egTokuji}
Let $k$ be a field and let $R:=k[\![x,y,z,u,v]\!]/(xy-uv)$. Then $R$ is a four dimensional hypersurface domain. In particular any module in $\md R$ has constant rank.

Consider the following minimal free resolution:

\begin{equation} \tag{\ref{egTokuji}.1}
F=(\cdots \overset{B}{\longrightarrow} R^2 \overset{A}{\longrightarrow} R^2 \overset{B}{\longrightarrow} R^2 \overset{A}{\longrightarrow} R^2 \to 0),
\end{equation}\\
where $A=\begin{pmatrix}
x & u \\
v & y
\end{pmatrix}$ and $B=\begin{pmatrix}
y & -u \\
-v & x
\end{pmatrix}$. Set $M=\coker A$. \\

Applying $\Hom_R(-,M)$ to (\ref{egTokuji}.1), we have
\begin{equation} \tag{\ref{egTokuji}.2}
0 \to M^2 \overset{^tA}{\longrightarrow} M^2 \overset{^tB}{\longrightarrow} M^2 \overset{^tA}{\longrightarrow} M^2 \overset{^tB}{\longrightarrow} \cdots.
\end{equation}
Since $^tB:M^2 \to M^2$ is injective, we conclude that $\Ext^{2i-1}_R(M,M)=0$ for all $i\geq 1$.
%In particular, $\Ext^{4-1}_R(M,M)=\Ext^3_R(M,M)=0$.

Let $\fp:=(x,y,u,v)$. Then $\fp$ is a prime ideal of $R$ of height three. Localizing (\ref{egTokuji}.1) at $\fp$, we get the following exact sequence:
\begin{equation}\tag{\ref{egTokuji}.3}
\cdots \overset{B}{\longrightarrow} R_\fp^2 \overset{A}{\longrightarrow} R_\fp^2 \overset{B}{\longrightarrow} R_\fp^2 \overset{A}{\longrightarrow} R_\fp^2 \to M_\fp \to 0.
\end{equation}
Since $x,y,u,v \in \fp R_\fp$, (\ref{egTokuji}.3) is a minimal free resolution of $M_\fp$.
In particular $M_\fp$ is not a free $R_\fp$-module, i.e., $M$ is not locally free on $X^3(R)$.
\end{eg}

\section*{Acknowledgments}
We are grateful to Lars Christensen, Mohsen Gheibi, Greg Piepmeyer, Srikanth Iyengar and Naoki Taniguchi for their feedback on the manuscript.

Part of this work was completed when Araya visited West Virginia University in January and February 2017. He is grateful for the kind hospitality of the WVU Department of Mathematics.

\end{document}